\renewcommand{\leq}{\leqslant}
\renewcommand{\geq}{\geqslant}
\newcommand{\Z}{\mathbb{Z}}
\renewcommand{\le}{\leqslant}
\renewcommand{\ge}{\geqslant}
\renewcommand{\leq}{\leqslant}
\renewcommand{\geq}{\geqslant}
\setlist[enumerate]{label=\rm{(\alph*)}}
\theoremstyle{definition}
\newtheorem{definition}{Definition}
\theoremstyle{plain}
\newtheorem{theorem}[definition]{Theorem}
\newtheorem{lemma}[definition]{Lemma}
\newtheorem{corollary}[definition]{Corollary}
\begin{document}

\author{Alice Devillers}
\address{\noindent Alice Devillers and Stephen Glasby, Centre for the Mathematics of Symmetry and Computation, University of Western Australia, 35 Stirling Highway, Perth 6009, Australia. \emph{E-mail addresses:} {\tt\texttt{\{alice.devillers, stephen.glasby\}@uwa.edu.au}}
}

\author{S.\,P. Glasby}
 
\thanks{Acknowledgements: SG gratefully acknowledges support from the Australian Research Council Discovery Project DP190100450.\newline %DP160102323.
2010 Math Subject Classification: 11A07, 11-01.
}
 
\title[Roots of unity and unreasonable differentiation]{Roots of unity and unreasonable differentiation}

\subjclass[2010]{}
\date{\today}

\begin{abstract}
  We explore when
  it is legal to differentiate a polynomial evaluated at a root of unity using modular arithmetic.
\end{abstract}

\maketitle

\section{Sometimes legal operations}\label{s:legal}

  The equation $(x+y)^p=x^p+y^p$ is valid in a field of prime
  characteristic~$p$. Thus an apparent error can be a legitimate deduction
  in the right circumstances.

%A primitive cube root of unity $\zeta$ satisfies $\zeta^2+\zeta+1=0$.
%Differentiating gives the false equation $2\zeta+1=0$. Taking real parts,
%however, gives a valid equation $\textup{Re}(2\zeta+1)=0$.
%Moreover, this is no accident. Given a real polynomial $f(t)\in\R[t]$ which
%divides $t^n-1$, and a root $\zeta$ of $f(t)$, it is easy to see that
%the derived polynomial $f'(t)$ evaluated at $\zeta$ has real part zero.
%To see this note that $t^n-1$ is square-free, and hence so too is the
%divisor~$f(t)$. Consequently $\gcd(f(t),f'(t))=1$ and there exist
%real polynomials $a(t),b(t)\in\R[t]$ such that $a(t)f(t)+b(t)f'(t)=1$.
%Evaluating at $\zeta$ shows $b(\zeta)f'(\zeta)=1$.
%Since complex conjugation fixes real polynomials, we have
%$\overline{b(t)}=b(t)$. Hence
%$b(\zeta)f'(\zeta)=1=\overline{b(\zeta)}f'(\zeta)$ and $\textup{Re}(f'(\zeta))=0$ as claimed.

%The above example leads us to ask: When is it valid to differentiate (with
%respect to zeta) a polynomial in $\zeta$?

 Denote the $k$th derivative of 
$t^n$ as $n^{\underline{k}}t^{n-k}$ where $n^{\underline{k}}$ equals
 $n(n-1)\cdots(n-k+1)$ for $k>0$, and 1 if $k=0$. We pronounce $n^{\underline{k}}$
 as ``$n$ falling factorial $k$''. Observe that
 $n^{\underline{k}}$ is divisible by each integer in $\{1,\dots,k\}$, and $n^{\underline{k}}=0$ for
 $n<k$. Also the $k$th derivative $f^{(k)}(t)$ of a power series $f(t)=\sum_{n\ge0}f_nt^n$ equals $\sum_{n\ge0}f_nn^{\underline{k}}t^{n-k}=\sum_{n\ge k}f_nn^{\underline{k}}t^{n-k}$.

 Let $\alpha\in\Z$ satisfy $\alpha^n\equiv1\pmod n$. Thus $\alpha$ is a root, modulo~$n$, of the polynomial $t^n-1=(t-1)f(t)$, where $f(t)=\sum_{i=0}^{n-1} t^i$.
 It is clear that $f^{(0)}(\alpha)  \equiv 0 \pmod n$ when
 $\alpha\equiv 1 \pmod n$. However, it seems unreasonable to expect that
 $f^{(k)}(\alpha)= \sum_{i=0}^{n-1}i^{\underline{k}}\alpha^{i-k}\equiv 0 \pmod n$ holds for all $k\ge0$. What looks like a blunder turns out to be true under the
 (unreasonably) weak assumptions of Theorem~\ref{T1}.
 % It turns out that $f(\alpha)  \equiv 0 \pmod n$ (note this is true even if $\alpha\equiv 1 \pmod n$ and even if $n$ is not a prime, by Theorem~\ref{T1} below for $k=0$).

%We will explore when the ``false equation" $f^{(k)}(\alpha)= \sum_{i=0}^{n-1}i^{\underline{k}}\alpha^{i-k}\equiv 0 \pmod n$, and prove the following.

\begin{theorem}\label{T1}
  Suppose $k\ge0$, $n\ge1$, $\alpha\in\Z$ where $\alpha^n\equiv1\pmod n$.
  Then
  \begin{equation}\label{E3}
   f^{(k)}(\alpha)= \sum_{i=0}^{n-1}i^{\underline{k}}\alpha^{i-k}\equiv 0 \pmod n.
  \end{equation}
  if and only if at least one of the following hold:
  \begin{enumerate}[{\rm (a)}]
    \item  $k+1\not\in\{4,q\}$ where $q$ is prime, or
    \item $k+1=4$ and $4\nmid n$, or
    \item  $k+1$ is a prime $q$, and $q\nmid n$ or $\alpha\not\equiv1 \pmod q$.
  \end{enumerate}
\end{theorem}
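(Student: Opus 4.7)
The plan is to fix a prime power $p^e \| n$ and analyse $f^{(k)}(\alpha) \pmod{p^e}$, assembling the result via the Chinese Remainder Theorem; throughout, $v_p$ denotes the $p$-adic valuation. The starting point is the identity obtained by applying Leibniz's rule to $(t-1)f(t) = t^n - 1$, namely
\[
(t-1) f^{(k)}(t) + k f^{(k-1)}(t) = n^{\underline{k}} t^{n-k}.
\]
Since $p^e \mid n \mid n^{\underline{k}}$ whenever $k \ge 1$, evaluating at $t = \alpha$ makes the right-hand side $\equiv 0 \pmod{p^e}$. If $p \nmid \alpha - 1$ then $\alpha - 1$ is a unit modulo $p^e$; combined with the base case $(\alpha - 1) f(\alpha) = \alpha^n - 1 \equiv 0 \pmod{p^e}$ and induction on $k$, this yields $f^{(k)}(\alpha) \equiv 0 \pmod{p^e}$ for all $k$. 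Hence only the primes $p \mid n$ with $\alpha \equiv 1 \pmod{p}$ can obstruct the congruence.

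For such a prime I would expand around $t = 1$. Writing $m = \alpha - 1$ and $s = v_p(m) \ge 1$, the identity $f(1+u) = ((1+u)^n - 1)/u = \sum_{j \ge 0} \binom{n}{j+1} u^j$ gives, after termwise differentiation and evaluation at $u = m$,
\[
f^{(k)}(\alpha) = \sum_{\ell \ge 0} (\ell + k)^{\underline{k}} \binom{n}{\ell + k + 1}\, m^\ell,
\]
whose $\ell = 0$ term equals $k!\binom{n}{k+1} = n^{\underline{k+1}}/(k+1)$. The central claim is that every $\ell \ge 1$ summand is $\equiv 0 \pmod{p^e}$, so $f^{(k)}(\alpha) \equiv k!\binom{n}{k+1} \pmod{p^e}$. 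I would prove this by rewriting the $\ell$-th summand as $n^{\underline{\ell+k+1}}\, m^\ell / \bigl(\ell!\,(\ell + k + 1)\bigr)$; since $n^{\underline{\ell+k+1}}$ is a product of $\ell+k+1$ consecutive integers containing $n$, its $p$-adic valuation is at least $e + v_p\bigl((\ell+k)!\bigr)$, and a short bookkeeping reduces the matter to the inequality
\[
v_p(k!) + \ell s \;\ge\; v_p(\ell + k + 1) \qquad (\ell \ge 1,\ s \ge 1).
\]
This is immediate when $\ell \ge v_p(\ell + k + 1)$; in the opposite case, $k$ is forced to exceed $p^{v_p(\ell+k+1)} - \ell - 1$, and Legendre's formula for $v_p(k!)$ then supplies enough slack. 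I expect this inequality to be the main technical hurdle, though elementary.

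With the reduction to $k!\binom{n}{k+1} \pmod{p^e}$ in hand, the task is to compute
\[
v_p\!\left(k!\binom{n}{k+1}\right) = e + v_p\!\left((n-1)^{\underline{k}}\right) - v_p(k+1)
\]
using the congruence $(n-1)(n-2)\cdots(n-k) \equiv (-1)^k k! \pmod{p^e}$. A direct $p$-adic calculation with Legendre's formula shows that, assuming $p \mid n$, this valuation drops below $e$ precisely when $k + 1 = p$, or $k + 1 = 4$ with $p = 2$ and $4 \mid n$; in those cases it equals exactly $e - 1$. Translating to the global picture, $f^{(k)}(\alpha) \not\equiv 0 \pmod n$ happens exactly when some prime $p \mid n$ is simultaneously in Case B ($\alpha \equiv 1 \pmod p$) and exhibits this deficiency. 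Since $2 \mid n$ forces $\alpha$ to be odd, the $k+1 = 4$ branch fails precisely when $4 \mid n$ (the negation of (b)), while the $k+1 = q$ prime branch fails precisely when $q \mid n$ and $\alpha \equiv 1 \pmod q$ (the negation of (c)). Outside these scenarios clause (a) or the relevant safety condition in (b)/(c) holds and the congruence is valid.
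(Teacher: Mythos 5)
Your proposal is correct and follows essentially the same route as the paper: reduce via the Chinese Remainder Theorem to prime powers, dispose of the primes with $\alpha\not\equiv1\pmod p$ using Leibniz on $(t-1)f(t)=t^n-1$, and for $\alpha\equiv1\pmod p$ expand in powers of $\alpha-1$ so that $f^{(k)}(\alpha)\equiv \frac{(n-1)^{\underline{k}}}{k+1}n\pmod{p^e}$ --- your series is term-for-term the paper's expansion~\eqref{E4} (cf.\ Lemma~\ref{L2}), with only the bookkeeping for the tail terms done slightly differently. The valuation statement you then assert (the drop below $e$ occurs exactly when $k+1=p$, or $k+1=4$ with $4\mid n$, and then by exactly $1$) is precisely Lemma~\ref{L1}(c) of the paper and is true, so your sketch fills in to a complete proof of both directions of the equivalence.
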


The motivation for Theorem~\ref{T1} came from a (presently
unfinished~\cite{BDG}) study of input-output
automata on a group $G$. We considered
the finite groups $G$ for which there exists a `constant' $k\in G$ and
a function $f\colon G\to G$ satisfying $f(xk)=xf(x)$ for all $x\in G$.
We call these $J$-groups (as they are related to the Jacobson radical of
a near ring).
A simple argument shows that $J$-groups must have odd order, and hence
are solvable by~\cite{FT}. We conjectured~\cite{BDG} that any nilpotent group of
odd order is a $J$-group. To prove that many metacyclic groups
are $J$-groups required the $k=0$ and $k=1$ cases of Theorem~\ref{T1}. The proof for
all $k\ge0$ is not much harder.

\section{The proofs}\label{s:proofs}

We first establish some preliminary results before proving Theorem~\ref{T1}.

Henceforth, $n,i,j,k$ will be integers. 

A sum $\sum_{i=n_0}^{n_1-1}g(i)$ collapses if we find a function
$G$ such that $g(i)=G(i+1)-G(i)$ for $n_0\leq i<n_1$. Then
$\sum_{i=n_0}^{n_1-1}g(i)=G(n_1)-G(n_0)$. By analogy with differentiation, we write
$(\Delta G)(i)=G(i+1)-G(i)$. For example, if $g(i)=i^{\underline{k}}$, then it
follows from $\Delta(i^{\underline{k+1}})=(i+1)i^{\underline{k}}-i^{\underline{k}}(i-k)
  =(k+1)i^{\underline{k}}$ that $G(i)=i^{\underline{k+1}}/(k+1)$. Hence
\begin{equation}\label{E1}
  \sum_{i=n_0}^{n_1-1}i^{\underline{k}}
  =\sum_{i=n_0}^{n_1-1}\Delta\left(\frac{i^{\underline{k+1}}}{k+1}\right)=
  \frac{n_1^{\underline{k+1}}}{k+1}-\frac{n_0^{\underline{k+1}}}{k+1}
  =\frac{n_1^{\underline{k+1}}-n_0^{\underline{k+1}}}{k+1}.
\end{equation}

Clearly $k$ divides $n^{\underline{k}}$ for all $n\ge0$ and $k\ge1$.
The $p$-adic valuation $\nu_p(n)$ of an integer $n\ne 0$ is defined by 
$\nu_p(n)=\log_p(n_p)$ where $n_p$ is the largest $p$-power divisor of $n$.
This (additive) valuation extends to $\mathbb{Q}^\times$ by defining $\nu_p(r/s)$ to be  $\nu_p(r)-\nu_p(s)$.
%Denote the $p$-adic valuation of $x\in\mathbb{Q}$ by $\nu_p(x)$
%where $\nu_p(n)=b$ means $p^b\mid n$ and $p^{b+1}\nmid n$.

\begin{lemma}\label{L1}
  Suppose $k\ge1$ and $n\ge1$. Let $p\mid (k+1)$ where $p$ is a prime, and let $e=\nu_p(k+1)\ge 1$.
  \begin{enumerate}[{\rm (a)}]
  \item If $k+1\ne p^e$, then
    $\nu_p((n-1)^{\underline{k}})\ge e$.
    \item If $k+1=p^e$, then $\nu_p((n-1)^{\underline{k}})\ge e-1$ where equality holds only if $p\mid n$.
%    \item If $k+1=p^e$, then $\nu_p((n-1)^{\underline{k}})\ge e-1$ where equality can only happen if $p\mid n$.
  \item  $\nu_p((n-1)^{\underline{k}}/(k+1))<0$ if and only if $k+1\in\{4,p\}$
    and $(k+1)\mid n$, in which case $\nu_p((n-1)^{\underline{k}}/(k+1))=-1$.
%  \item If $k+1\not\in\{4,p\}$ or $(k+1)\nmid n$, then
%    $\nu_p((n-1)^{\underline{k}}/(k+1))\ge0$.
  \end{enumerate}
\end{lemma}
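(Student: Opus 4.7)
The plan is to reduce everything to Legendre's formula for $\nu_p(k!)$. Writing $(n-1)^{\underline{k}} = k!\binom{n-1}{k}$ and noting that the binomial coefficient is a positive integer gives the basic bound
\[\nu_p((n-1)^{\underline{k}}) \ge \nu_p(k!).\]
Legendre's formula $\nu_p(m!) = (m - s_p(m))/(p-1)$ (with $s_p$ the base-$p$ digit sum) combined with $\nu_p((k+1)!) = \nu_p(k!) + e$ rearranges to the workhorse identity
\[\nu_p(k!) = \frac{(k+1)-s_p(k+1)}{p-1}-e.\]

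For part (a) I would write $k+1 = p^e m$ with $m \ge 2$ coprime to $p$, so that $s_p(k+1) = s_p(m) \le m$. Then
\[(k+1) - s_p(k+1) \ge (p^e-1)m \ge 2(p^e-1) = 2(p-1)(1+p+\cdots+p^{e-1}) \ge 2e(p-1),\]
which yields $\nu_p(k!) \ge e$, and hence the claim.

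For part (b), $k+1 = p^e$ gives $s_p(k+1)=1$, hence $\nu_p(k!) = (1+p+\cdots+p^{e-1}) - e$, and a short induction on $e$ shows this is at least $e-1$. The subtle point is the equality clause. If $\nu_p((n-1)^{\underline{k}}) = e-1$, then both $\nu_p(\binom{n-1}{k})=0$ and $\nu_p(k!) = e-1$; the second condition forces $1 + p + \cdots + p^{e-1} = 2e - 1$, whose only solutions are $e=1$ with any $p$, and $(p,e)=(2,2)$, i.e.\ $k+1 \in \{p,4\}$. I would then finish each case by hand: when $k+1=p$, the $p-1$ terms $n-1, n-2, \ldots, n-p+1$ occupy every residue modulo $p$ except $n \bmod p$, so their product is coprime to $p$ iff $p\mid n$; when $k+1=4$, a parity split on $n$ shows $\nu_2((n-1)(n-2)(n-3))=1$ iff $4\mid n$. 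In either exceptional case equality therefore forces $p\mid n$.

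Part (c) falls out by subtracting $e = \nu_p(k+1)$: combining (a) and (b), $\nu_p((n-1)^{\underline{k}}/(k+1))$ is nonnegative unless $k+1 = p^e$ and $\nu_p((n-1)^{\underline{k}}) = e-1$. The equality analysis from (b) identifies this situation precisely as $k+1\in\{p,4\}$ together with $p\mid n$ (equivalently, $4\mid n$ when $k+1=4$), i.e.\ $(k+1)\mid n$; the common value is then $-1$. I expect the main obstacle to be the equality clause in (b): it is where the two sporadic values $k+1=p$ and $k+1=4$ must be pinned down by hand, while everything else is routine unwinding of Legendre's formula.
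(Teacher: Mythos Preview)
Your argument is correct and takes a genuinely different route from the paper's. The paper works directly with the $k$ consecutive factors of $(n-1)^{\underline{k}}$: for (a) it writes $k+1=ab$ with $\gcd(a,b)=1$ and $1<a,b\le k$, so each of $a,b$ (hence $ab$) divides a product of $k$ consecutive integers; for (b) it uses that $p^{e-1}\le k$ to get $\nu_p\ge e-1$, and then the fact that $k+1\mid n^{\underline{k+1}}=n\,(n-1)^{\underline{k}}$ forces $p\mid n$ at equality. Only in (c) does the paper pin down the exceptional values, by observing that once $p\mid n$ the factors $n-jp$ already contribute $p^{e-1}-1$ copies of $p$, so $p^{e-1}-1\le e-1$, which singles out $e=1$ or $(p,e)=(2,2)$.

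Your approach instead routes everything through $\nu_p(k!)$ via $(n-1)^{\underline{k}}=k!\binom{n-1}{k}$ and the digit-sum form of Legendre's formula. This is slicker for (a), and it front-loads the identification of the sporadic cases into (b): the equation $1+p+\cdots+p^{e-1}=2e-1$ does the same job as the paper's $p^{e-1}\le e$, but you reach it without first knowing $p\mid n$. As a result your (c) is essentially a corollary of your strengthened (b), whereas the paper's (c) carries its own counting argument. One cosmetic point: the binomial coefficient $\binom{n-1}{k}$ is only \emph{positive} when $n\ge k+1$; for $1\le n\le k$ the falling factorial vanishes and every claim holds with $\nu_p(0)=+\infty$, so you should say ``nonnegative integer'' or dispose of that range separately.
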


\begin{proof} 
(a)~Suppose first that
$k+1$ is not a $p$-power and write $k+1=ab$ where $\gcd(a,b)=1$ and
$1<a<b<k+1$. Since $a,b\le k$ it follows that $a$ and $b$, and hence
  $k+1=ab$, divide $(n-1)^{\underline{k}}$.
  Hence $e=\nu_p(k+1)\le\nu_p((n-1)^{\underline{k}})$. This proves~(a).
  
(b)~Suppose now that $k+1=p^e$. As $p^{e-1}\leq k$, we deduce that
$p^{e-1}\mid (n-1)^{\underline{k}}$, and so $\nu_p((n-1)^{\underline{k}})\ge e-1$. Suppose $\nu_p((n-1)^{\underline{k}})=e-1$. As $k+1$ divides $n^{\underline{k+1}}=n(n-1)^{\underline{k}}$
but not $(n-1)^{\underline{k}}$, we deduce that $p$ divides $n$.
 This proves~(b).

(c)~Assume first that $\nu_p((n-1)^{\underline{k}}/(k+1))<0$, that is, $\nu_p((n-1)^{\underline{k}})<\nu_p(k+1)=e$. Part~(a) implies $k+1=p^e$ and Part~(b) implies $\nu_p((n-1)^{\underline{k}})= e-1$ and $p\mid n$,  so that $\nu_p((n-1)^{\underline{k}}/(k+1))=-1$. 
Thus each factor of $(n-1)^{\underline{k}}$ of the form $n-jp$ with
$1\le j\le p^{e-1}-1$ is a multiple of $p$, and so $\nu_p((n-1)^{\underline{k}})\geq p^{e-1}-1$. Therefore
$p^{e-1}-1\le e-1$, that is $p^{e-1}\le e$.
The latter inequality is true
for $e=1$ and all primes $p$, and for $e=2$ and $p=2$, and false otherwise.

If $e=1$, then $k+1=p\mid n$. If $e=2$ and $p=2$, then $k+1=4$, $2\mid n$ and $\nu_2((n-1)^{\underline{3}})= 1$. Thus $n-1$ and $n-3$ are odd while $n-2\equiv 2\pmod 4$. It follows that $n\equiv 0\pmod 4$, and so in both cases $k+1$ divides $n$.

Conversely, assume that $k+1\in\{4,p\}$ and $(k+1)\mid n$.
If $k+1=4\mid n$, then $(n-1)^{\underline{k}}=(n-1)(n-2)(n-3)$ where $n-i\equiv 4-i\pmod 4$. Thus $\nu_2((n-1)^{\underline{3}})= 1<\nu_2(k+1)=2$.
If $k+1=p\mid n$, then $(n-1)^{\underline{k}}=(n-1)(n-2)\cdots(n-p+1)$ where $n-k\equiv p-k \not\equiv 0\pmod p$. Thus, in both cases, we have { $\nu_p((n-1)^{\underline{k}})= e-1<\nu_p(k+1)=e$, as desired.}
%Thus, in both cases, we have $\nu_p((n-1)^{\underline{k}})= 0<\nu_p(k+1)=1$, as desired.
%(c)~This is the contrapositive of part~(b).
\end{proof}

\begin{corollary}\label{C}
  By Lemma~$\ref{L1}$, we have that $(n-1)^{\underline{k}}/(k+1)$ is an integer unless 
  \begin{enumerate}[{\rm (i)}]
    \item $k+1=4$ and $4\mid n$, or
    \item $k+1$ is a prime $p$, and $p\mid n$.
  \end{enumerate}
  Moreover, $\nu_p((n-1)^{\underline{k}}/(k+1))\ge-1$ and
  $\nu_p((n-1)^{\underline{k}}/(k+1))\ge0$ if $(k+1)\nmid n$.
\end{corollary}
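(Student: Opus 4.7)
The plan is to read off the two exceptional cases directly from Lemma~\ref{L1}(c) after disposing of the trivial primes. Since $(n-1)^{\underline{k}}/(k+1)$ is an integer if and only if $\nu_p((n-1)^{\underline{k}}/(k+1)) \ge 0$ for every prime $p$, I would first handle the primes $p$ with $p \nmid (k+1)$: for such $p$, one has $\nu_p(k+1) = 0$ and $\nu_p((n-1)^{\underline{k}}) \ge 0$, so the valuation of the quotient is automatically nonnegative. Consequently the whole question reduces to the finitely many primes $p \mid (k+1)$.

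For each such $p$, Lemma~\ref{L1}(c) provides an exact criterion: $\nu_p((n-1)^{\underline{k}}/(k+1)) < 0$ precisely when $k+1 \in \{4, p\}$ and $(k+1) \mid n$, and in that case the valuation equals $-1$. I would now enumerate the ways this can occur in terms of the shape of $k+1$. If $k+1$ is composite and distinct from $4$, then every prime divisor $p$ of $k+1$ satisfies $k+1 \notin \{4,p\}$, so the quotient is an integer. If $k+1 = 4$, the only prime divisor is $p = 2$, and negativity is equivalent to $4 \mid n$, which is case~(i). If $k+1 = q$ is prime, the only relevant prime is $p = q$, and negativity is equivalent to $q \mid n$, which is case~(ii). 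This yields the first assertion of the corollary.

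The ``moreover'' clause then follows at once. The uniform lower bound $\nu_p((n-1)^{\underline{k}}/(k+1)) \ge -1$ holds across all primes $p$ because the only path to a negative valuation is Lemma~\ref{L1}(c), which gives exactly $-1$; for all other primes the quotient already has nonnegative valuation. Under the additional hypothesis $(k+1) \nmid n$, the negative-valuation clause in Lemma~\ref{L1}(c) is vacuous since it insists $(k+1) \mid n$, so the valuation is nonnegative for every prime $p$.

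I do not expect any real obstacle: the corollary is essentially bookkeeping on top of Lemma~\ref{L1}(c). The only mild care needed is to remember that primes outside the prime factorisation of $k+1$ are automatically safe, and to translate the disjunction ``$k+1 \in \{4, p\}$'' into the two stated structural cases (i) and (ii).
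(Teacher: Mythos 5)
Your proposal is correct and follows essentially the same route as the paper, which states the corollary as an immediate consequence of Lemma~\ref{L1}(c) without further argument; your explicit reduction to the primes dividing $k+1$ and the case split on the shape of $k+1$ is exactly the bookkeeping the paper leaves implicit.
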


%\begin{theorem}\label{T2}
 % Suppose $n\ge1$, and $\alpha^n\equiv1\pmod n$ for some $\alpha\in\Z$.
 % For $k\ge0$ assume that  one of the following holds:
 % \begin{enumerate}[{\rm (a)}]
 % \item  $k+1\not\in\{4,q\}$ where $q$ is prime, or
%\item $k+1=4$ and $4\nmid n$, or
%\item $k+1$ is a prime $q$ and either $q\nmid n$ or $\alpha\not\equiv1 \pmod q$.
%\end{enumerate}
%  Then
%  \begin{equation}\label{E2}
 %   \sum_{i=0}^{n-1}i^{\underline{k}}\alpha^{i-k}\equiv 0 \pmod n.
 % \end{equation}
%\end{theorem}

To prove Theorem~\ref{T1}, we will also need the following lemma.

\begin{lemma}\label{L2}
  Suppose $k\geq0$, $n\ge1$ and $\alpha\equiv1\pmod p$ where $p$ is prime. Then
  \[
  \sum_{i=0}^{n-1}i^{\underline{k}}\alpha^{i-k}\equiv \frac{(n-1)^{\underline{k}}}{k+1}n\pmod {p^\ell}\qquad\textup{where $\ell:=\nu_p(n)$.}
  \]
%If $\alpha\equiv1\pmod p$, then $$\sum_{i=0}^{n-1}i^{\underline{k}}\alpha^{i-k}\equiv \frac{(n-1)^{\underline{k}}}{k+1}n\pmod {p^\ell} $$ where $\ell:=\nu_p(n)$.
\end{lemma}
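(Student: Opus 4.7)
The plan is to expand $\alpha^{i-k} = \bigl(1 + (\alpha-1)\bigr)^{i-k}$ by the binomial theorem, to recognise the $j = 0$ contribution as the desired right-hand side, and to bound the remaining terms $p$-adically using Corollary~\ref{C}.

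For $i \ge k$ write $\alpha^{i-k} = \sum_{j \ge 0}\binom{i-k}{j}(\alpha-1)^j$; for $i < k$ the factor $i^{\underline{k}}$ vanishes. Interchanging summation, and using the integer identity
\[
i^{\underline{k}}\binom{i-k}{j} \;=\; \binom{i}{j}(i-j)^{\underline{k}} \;=\; \frac{i^{\underline{k+j}}}{j!},
\]
reduces the inner sum to $j!^{-1}\sum_{i=0}^{n-1} i^{\underline{k+j}}$, which by the telescoping identity~(\ref{E1}) equals $n^{\underline{k+j+1}}/(j!\,(k+j+1))$. Writing $n^{\underline{k+j+1}} = n(n-1)^{\underline{k+j}}$ gives
\[
\sum_{i=0}^{n-1} i^{\underline{k}}\alpha^{i-k} \;=\; \sum_{j \ge 0}\frac{(\alpha-1)^j \cdot n \cdot (n-1)^{\underline{k+j}}}{j!\,(k+j+1)},
\]
a finite sum (since $(n-1)^{\underline{k+j}}=0$ once $k+j \ge n$) whose $j = 0$ term is exactly $\frac{(n-1)^{\underline{k}}}{k+1} n$.

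It remains to show that every $j \ge 1$ term has $p$-adic valuation at least $\ell = \nu_p(n)$. The factor $n$ contributes $\ell$; Corollary~\ref{C} gives $\nu_p\bigl((n-1)^{\underline{k+j}}/(k+j+1)\bigr) \ge -1$; and since $p\mid\alpha-1$, Legendre's formula $\nu_p(j!) = (j - s_p(j))/(p-1)$ combined with $\nu_p((\alpha-1)^j) \ge j$ yields
\[
\nu_p\!\left(\frac{(\alpha-1)^j}{j!}\right) \;\ge\; \frac{j(p-2)+s_p(j)}{p-1} \;\ge\; 1
\]
for every $j \ge 1$ and every prime $p$ (for $p=2$ using $s_2(j)\ge 1$; for $p\ge 3$ using $j(p-2)\ge 1$ and $s_p(j)\ge 1$). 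Adding the three contributions gives $\nu_p \ge 1 + \ell + (-1) = \ell$, as required.

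The main obstacle is that the bound $\nu_p((\alpha-1)^j/j!)\ge 1$ is tight (equality at $j=1$) and the Corollary~\ref{C} bound can be tight ($-1$ when $k+j+1\in\{4,p\}$ divides $n$), so the $j\ge 1$ estimates are sharp: the proof works because the $+1$ gain and the $-1$ loss cancel with no room to spare.
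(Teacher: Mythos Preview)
Your proof is correct and follows essentially the same route as the paper's: write $\alpha=1+y$ with $p\mid y$, expand binomially, use $i^{\underline{k}}\binom{i-k}{j}=i^{\underline{k+j}}/j!$ together with the telescoping identity~\eqref{E1} to reach $\sum_{j}\frac{y^j}{j!}\cdot\frac{(n-1)^{\underline{k+j}}}{k+j+1}\,n$, and then kill the $j\ge1$ terms via Legendre's bound $\nu_p(y^j/j!)\ge1$ combined with Corollary~\ref{C}. The only cosmetic difference is that you invoke the digit-sum form $\nu_p(j!)=(j-s_p(j))/(p-1)$ of Legendre's formula, whereas the paper uses the cruder estimate $\nu_p(j!)<j/(p-1)\le j$ and integrality; both yield the same conclusion.
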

\begin{proof} 
  Since $\alpha\equiv1\pmod p$, we have $\alpha=1+y$ where $p\mid y$. Using $i^{\underline{k}}=0$ if $0\le i<k$,
  and Eq.~\eqref{E1} gives:
\begin{align}\label{E4}
  \sum_{i=0}^{n-1}i^{\underline{k}}\alpha^{i-k}&=
  \sum_{i=k}^{n-1}i^{\underline{k}}(1+y)^{i-k}=
  \sum_{i=k}^{n-1}i^{\underline{k}}\sum_{j=0}^{i-k}\binom{i-k}{j}y^j\notag\\ 
  &=\sum_{j=0}^{n-1-k}y^j\sum_{i=k+j}^{n-1}i^{\underline{k}}\binom{i-k}{j}=\sum_{j=0}^{n-1-k}\frac{y^j}{j!}\sum_{i=k+j}^{n-1}i^{\underline{k+j}}\notag\\
 &   =\sum_{j=0}^{n-1-k}\frac{y^j}{j!}\left(\frac{n^{\underline{k+j+1}}}{k+j+1}
  -\frac{(k+j)^{\underline{k+j+1}}}{k+j+1}\right) \notag\\
 & =\sum_{j=0}^{n-1-k}\frac{y^j}{j!}\frac{n^{\underline{k+j+1}}}{k+j+1} =\sum_{j=0}^{n-1-k}\frac{y^j}{j!}\frac{(n-1)^{\underline{k+j}}}{k+j+1}n.
\end{align}
%\begin{align}\label{E4}
%  \sum_{i=0}^{n-1}i^{\underline{k}}\alpha^{i-k}&=
%  \sum_{i=k}^{n-1}i^{\underline{k}}(1+y)^{i-k}=
%  \sum_{i=k}^{n-1}i^{\underline{k}}\sum_{j=0}^{i-k}\binom{i-k}{j}y^j
%  =\sum_{j=0}^{n-1}y^j\sum_{i=k+j}^{n-1}i^{\underline{k}}\binom{i-k}{j}\notag\\% 
%  &=\sum_{j=0}^{n-1}\frac{y^j}{j!}\sum_{i=k+j}^{n-1}i^{\underline{k+j}}
%  =\sum_{j=0}^{n-1}\frac{y^j}{j!}\left(\frac{n^{\underline{k+j+1}}}{k+j+1}
%  -\frac{(k+j)^{\underline{k+j+1}}}{k+j+1}\right) \notag\\
% & =\sum_{j=0}^{n-1}\frac{y^j}{j!}\frac{n^{\underline{k+j+1}}}{k+j+1}\notag \\
% & =\sum_{j=0}^{n-1}\frac{y^j}{j!}\frac{(n-1)^{\underline{k+j}}}{k+j+1}n.
%\end{align}
%Here we used the fact that $i^{\underline{k}}=0$ if $0<i<k$, and we used \eqref{E1}.

%Suppose first that one of (a), (b), or (c) holds. If (a) or (b) holds, we immediately get by Lemma  Lemma~\ref{L1}(c) that $\nu_p((n-1)^{\underline{k}}/(k+1))\geq 0$. If (c) holds, then either $k+1\neq p$ or $k+1=p$ and $p\nmid n$ since $\alpha\equiv1\pmod p$. therefore in that case too we have that  $\nu_p((n-1)^{\underline{k}}/(k+1))\geq 0$.

Consider the summands in~\eqref{E4} with $j\ge 1$.
By Legendre's formula, $\nu_p(j!)=\sum_{i=1}^\infty \lfloor\frac{j}{p^i}\rfloor$.
(The sum is finite as $\lfloor\frac{j}{p^i}\rfloor=0$ for $p^i>j$.)
Thus $\nu_p(j!)<\sum_{i=1}^\infty \frac{j}{p^i}=\frac{j}{p-1}$. Therefore
 $\nu_p(j!)< j$ for $j\ge1$, and hence $p$ divides $y^j/j!$.
Since $\nu_p(y^j/j!)\ge 1$ for all $j\ge1$, and
$\nu_p(\frac{(n-1)^{\underline{k+j}}}{k+j+1})\ge -1$ by Corollary~\ref{C}, we have
$\nu_p(\frac{y^j}{j!}\frac{(n-1)^{\underline{k+j}}}{k+j+1})\ge 0$. However $p^\ell\mid n$, and so $\frac{y^j}{j!}\frac{(n-1)^{\underline{k+j}}}{k+j+1}n\equiv 0\pmod {p^\ell}$ for each $j\geq 1$.
Hence
 \[
   \sum_{i=0}^{n-1}i^{\underline{k}}\alpha^{i-k}\equiv \frac{(n-1)^{\underline{k}}}{k+1}n\pmod {p^\ell}.\qedhere
\]
\end{proof}

\medskip 

%We now prove Theorem~\ref{T1}.
\begin{proof}[Proof of Theorem~\ref{T1}]
  When $n=1$, Eq.~\eqref{E3} is trivially true.
  Moreover, one of (a), (b) or (c) is true when $n=1$ since $4\nmid n$ and $q\nmid n$ for any prime $q$.
%For $n=1$, ~\eqref{E3} is trivially true (since all integers are equivalent modulo $1$)  and one of (a), (b), (c) is always true since $4\nmid n$ and $q\nmid n$ for any prime $q$.

We now assume $n>1$.
  Suppose that $n=n_1\cdots n_r$ where the $n_j$ are pairwise coprime
  and $n_j>1$ for each $j$.
  Given the ring isomorphism $\Z_n\to\Z_{n_1}\times\cdots\times\Z_{n_r}$, Eq.~\eqref{E3} holds if and only if $n_j\mid\sum_{i=0}^{n-1}i^{\underline{k}}\alpha^{i-k}$
  for each $j$. Suppose that $n_j=p_j^{\ell_j}$ where each $p_j$ is prime.
  Fix a prime factor $p$ of $n$, and set $\ell:=\nu_p(n)$.
  
%Assume that one of (a), (b), or (c) holds.  
  
  For each prime factor $p$ of $n$, we divide the proof in two cases.
  
  {\sc Claim~1:} If $\alpha\not\equiv1\pmod p$, then  $\sum_{i=0}^{n-1}i^{\underline{k}}\alpha^{i-k}\equiv0\pmod {p^\ell}$.
  
  Suppose that $\alpha\not\equiv1\pmod p$.
  Consider the identity $f(t)=\sum_{i=0}^{n-1}t^i=f_1(t)f_2(t)$ where
  $f_1(t)=t^n-1$ and $f_2(t)=(t-1)^{-1}$. The $k$-fold derivative of the product
  $f_1f_2$ is 
  $(f_1f_2)^{(k)}=\sum_{i=0}^k\binom{k}{i}f_1^{(k-i)}f_2^{(i)}$ by Leibnitz' formula. We have
  $f_1^{(i)}(t)=n^{\underline{i}}t^{n-i}$ for $i>0$, and
  $f_2^{(i)}(t)=(-1)^ii!(t-1)^{-1-i}=-i!(1-t)^{-1-i}$ for $i\ge0$. Hence, for $t\neq 1$,
  \[
f^{(k)}(t)=\sum_{i=0}^{n-1}i^{\underline{k}}t^{i-k}
  =f_1^{(0)}(t)f_2^{(k)}(t)-\sum_{i=0}^{k-1}\binom{k}{i}n^{\underline{k-i}}t^{n-k+i}i!(1-t)^{-1-i}.
  \]
  Replacing $\binom{k}{i}i!$ with $k^{\underline{i}}$ gives
  \[
    \sum_{i=0}^{n-1}i^{\underline{k}}t^{i-k}
    =-(t^n-1)k!(1-t)^{-1-k}-\sum_{i=0}^{k-1}k^{\underline{i}}n^{\underline{k-i}}t^{n-k+i}(1-t)^{-1-i}.
  \]
  Substituting $t=\alpha$ and noting that $\alpha^n\equiv1\pmod {p^\ell}$
  and $1-\alpha$ is a unit modulo~$p^\ell$ shows that
  $\sum_{i=0}^{n-1}i^{\underline{k}}\alpha^{i-k}\equiv0\pmod {p^\ell}$.
  The sum vanishes modulo $p^\ell$ for $k=0$, and for $k>0$
  because $n$ divides $n^{\underline{k-i}}$ for $i<k$ and $p^\ell\mid n$.
  This proves Claim~1.

{\sc Claim~2:} If $\alpha\equiv1\pmod p$ and at least one of the conditions (a), (b), or~(c) hold, then 
  $\sum_{i=0}^{n-1}i^{\underline{k}}\alpha^{i-k}\equiv 0\pmod {p^\ell}$. 

  Suppose that $\alpha\equiv1\pmod p$  and  at least one of the conditions (a), (b), or~(c) hold.
  We argue that $\frac{(n-1)^{\underline{k}}}{k+1}$ is an integer. Suppose not.
  Then by Corollary~\ref{C}, condition~(i) or (ii) holds.
  Since $\alpha\equiv1\pmod p$ both ~(i) and (ii) are incompatible with
  (a), (b), and~(c). This shows that $\frac{(n-1)^{\underline{k}}}{k+1}$ is
  an integer, and hence Claim~2 is true by Lemma~\ref{L2}.
%If $\frac{(n-1)^{\underline{k}}}{k+1}$ is an integer, then Claim~2 is
%true by Lemma~\ref{L2}. Suppose now that $\frac{(n-1)^{\underline{k}}}{k+1}$ is
%not an integer and $\alpha\equiv1\pmod p$. By Corollary~\ref{C},
%condition~(i) or (ii) holds. If (i) holds, then condition (b) does not hold.
%Similarly, if condition (ii) holds, it follows since $\alpha\equiv1\pmod p$
%that condition~(c) does not hold. Hence condition~(a) must hold.
%
%If $p\nmid k+1$, then $p^\ell$ clearly divides $\frac{(n-1)^{\underline{k}}}{k+1}n$. Thus we may assume now that $p\mid k+1$ (in particular $k\geq 1$).
%Condition (a) says $k+1\not\in\{4,q\}$ where $q$ is prime. Thus $\nu_p((n-1)^{\underline{k}}/(k+1))\ge 0$ by Corollary~\ref{C}, and so $\frac{(n-1)^{\underline{k}}}{k+1}n\equiv 0 \pmod {p^\ell}$. This establishes Claim~2.

In summary, we have $n_j\mid\sum_{i=0}^{n-1}i^{\underline{k}}\alpha^{i-k}$ for each $j$, and so~\eqref{E3} holds if  at least one of the conditions (a), (b), or (c) hold.
  
To finish the proof, we now prove the converse. 
Assume~\eqref{E3} holds but (a) does not hold. Then $n_j\mid\sum_{i=0}^{n-1}i^{\underline{k}}\alpha^{i-k}$
  for each $j$, and $k+1\in\{4,q\}$ where $q$ is a prime. We must show that conditions~(b) or~(c) hold.

  Suppose $k+1=4$. Assume $4\mid n$. Consider the prime factor $2$ of $n$.
  Since $\alpha^n\equiv1\pmod n$, we have $\alpha\equiv1\pmod 2$. Thus Lemma~\ref{L2} implies
  $\sum_{i=0}^{n-1}i^{\underline{3}}\alpha^{i-3}\equiv \frac{(n-1)^{\underline{3}}}{4}n\pmod {2^\ell}$ where $\nu_2(n)=\ell$. 
  By Lemma~\ref{L1}(c),  $\nu_2((n-1)^{\underline{3}}/4)<0$, and so $2^\ell$ does not divide $\frac{(n-1)^{\underline{3}}}{4}n$, a contradiction. Hence, if $k+1=4$,
  then $4\nmid n$ and (b) holds.

  Suppose $k+1$ is a prime $q$ and $\alpha\equiv 1 \pmod q$. Assume $q\mid n$.
  Consider the prime factor $q$ of $n$. Then
$\sum_{i=0}^{n-1}i^{\underline{k}}\alpha^{i-k}\equiv \frac{(n-1)^{\underline{k}}}{k+1}n\pmod {q^\ell}$  where $\nu_q(n)=\ell$, by Lemma~\ref{L2}. However, Lemma~\ref{L1}(c) shows  $\nu_q((n-1)^{\underline{k}}/(k+1))<0$, and so $q^\ell$ does not divide  $\frac{(n-1)^{\underline{k}}}{k+1}n$,  a contradiction.
Hence, if $k+1=q$ with $\alpha\equiv 1 \pmod q$, then $q\nmid n$ and condition (c) holds.
\end{proof}

Finally, observe that the requirement $\alpha\not\equiv 1\pmod q$
in Theorem~\ref{T1}(c) is needed. For example take $\alpha=5$,
$n=6$, $k=2$.
Then $\sum_{i=0}^{n-1}i^{\underline{k}}\alpha^{i-k}\equiv 0\pmod n$ and $q=k+1=3$.
Thus conditions~(a) and (b) do not hold, and the only part of~(c) that holds is
$\alpha\not\equiv1\pmod q$. %Here $f(t)=t^5+\cdots+t+1$ and $f^{(2)}(\alpha)\equiv0\pmod 6$.
In condition (b), we do not need to add ``or $\alpha\not\equiv 1 \pmod 2$'' because that never happens when $4\mid n$.

\vskip2mm{\sc Acknowledgement.} 
We thank the referee for their helpful suggestions.

%{\sc Acknowledgement.} 
%We thank the referee for suggesting improvements to this note.

\end{document}